\let\mathcal\mathscr
\renewcommand*{\backref}[1]{}
\renewcommand*{\backrefalt}[4]{%
    \ifcase #1 (Not cited.)%
    \or        (p.\,#2)%
    \else      (pp.\,#2)%
    \fi}
\newtheorem{theorem}{Theorem}
\newtheorem{thm}[theorem]{Theorem}
\newtheorem{lem}[theorem]{Lemma}
\newtheorem{cor}[theorem]{Corollary}
\numberwithin{equation}{section}
\numberwithin{theorem}{section}
\numberwithin{table}{section}
\newfont{\teneufm}{eufm10}
\newfont{\seveneufm}{eufm7}
\newfont{\fiveeufm}{eufm5}
\def\eqref#1{(\ref{#1})}
\def\cG{{\mathcal G}}
\def\cI{{\mathcal I}}
\def\cV{{\mathcal V}}
\def\cX{{\mathcal X}}
\def\({\left(}
\def\){\right)}
\def\l|{\left|}
\def\r|{\right|}
\def\rf#1{\left\lceil#1\right\rceil}
\def\mand{\qquad \mbox{and} \qquad}
\theoremstyle{definition}
 \def \F{{\mathbb F}}
\def \Z{{\mathbb Z}}
	\def \l{\lambda}
\begin{document}
\title{Value sets of sparse polynomials}

\author{Igor E. Shparlinski}
\address{School of Mathematics and Statistics, University of New South Wales,
 Sydney NSW 2052, Australia}
\email{igor.shparlinski@unsw.edu.au}

\author{Jos\'e Felipe Voloch}

\address{
School of Mathematics and Statistics,
University of Canterbury,
Private Bag 4800, Christchurch 8140, New Zealand
}
\email{felipe.voloch@canterbury.ac.nz}

\begin{abstract} We obtain a new lower bound on the size of value
set $\cV(f) = f(\F_p)$ of a sparse polynomial $f\in \F_p[X]$ over a finite field of $p$ elements when $p$ is prime. This bound is uniform 
with respect of the degree and depends on 
some natural arithmetic properties of the degrees of the monomial
terms of $f$ and the number of these terms. Our result is stronger than 
those which canted  be extracted from the bounds on multiplicities 
of individual values in $\cV(f)$.
\end{abstract}

\keywords{sparse polynomials,  value set, rational points on curves}
\subjclass[2010]{11T06,  14G15}

\maketitle

\section{Introduction}

The value set of a polynomial $f(X) \in \F_q[X]$ over a finite field $\F_q$ of $q$ elements,  is the set $\cV(f) = \{ f(a):~a \in \F_q \}$ and we define 
$V(f) = \# \cV(f)$. A much studied problem is to estimate
$V(f)$ in terms of $f$. An easy lower bound is $V(f) \ge q/\deg f$ as $f(x)=c$ has at most $\deg f$ solutions for any $c$.
This is essentially best possible in general but, given conditions
on $f$ it can sometimes be improved. In this paper, we  study 
the question of bounding from below $V(f)$ as a function of the number
of terms in $f$, rather than its degree. Specifically, if
$f(X) = a_0+ \sum_{i=1}^t a_iX^{n_i}$, we want to estimate $V(f)$ in terms
of $t$ and $q$. When the degree of $f$ is much higher than $t$, the
polynomial $f$ is said to be sparse. One can
bound the number of roots of sparse polynomials (see~\cite[Lemma~7]{CFKLLS})
and convert this to a lower bound on $V(f)$, as above. Oftentimes,
as described in~\cite{BCR,CGRW} a sparse polynomial may have many roots.
We prove, however, that for $q=p$ prime one can give a nontrivial lower bound on $V(f)$,
for $f$ sparse, 
even when equations of the form $f(x) =a$ have many roots in $\F_p$. 
%\todoFy{Is this right?}
%\todoI{I think it is. Reworded slightly}
In addition, this bound is  always better
than the one obtained from the upper bound of~\cite[Lemma~7]{CFKLLS} 
on the number of roots,
when it applies, for $t \ge 9$. 

We obtain our results in three steps. First, using a 
monomial change of variables we reduce the degree of the polynomial, 
as in~\cite{CFKLLS}. Second, we bound the number of irreducible components
of $f(X)-f(Y)$ by adapting a result of Zannier~\cite{Z}. Finally, we use
the results of~\cite{V} to get our bounds. 

%\todoI{reworded}
We also give a special treatment in the case of binomials, via different arguments
and we obtain stronger results in that case.

%
%We also treat the case of binomials
%($t=2$) in more detail, getting stronger results in that case.

We recall that the notations $U = O(V)$,  $U \ll V$ and  $V \gg U$,   are all 
equivalent to the statement that $|U| \le cV$ for some constant $c$ which throughout 
this work may depend  on the positive integer parameter $t$
(the number of terms of the polynomials involved), %\todoI{Added. this}
and is absolute otherwise.

%\section{Factors of difference polynomials of sparse   polynomials}

\section{Factors of differences of sparse Laurent polynomials}

%We start with the following useful result.
%\todoI{Where is it used?} 
%\begin{lem}
%\label{lem:deg}
%Let $K$ be a field of characteristic $p>0$ and let  $h(X,Y) \in K[X,Y]$ be irreducible.
%Then for any integer $d\ge 1$  with $d \nmid p$ the polynomial $h(X^d,Y^d)$ factors as a product of
%$m$ polynomials of the same degree for some $m \mid d^2$.
%\end{lem}
%
%\begin{proof} Let $\mu_d$ be the set of $d$ roots unity of order. $e\mid d$ (in particular $\# \mu_d =d$).
%The change of variables $(X,Y) \mapsto (\zeta X,\eta Y)$ with. $\zeta ,\eta \in \mu_d$,  defines an action of
%$\mu_d \times \mu_d$ on the irreducible factors of $h(X^d,Y^d)$. This action
%is transitive since $h$ is irreducible. So $m$ is the size of the unique orbit and
%each factor is obtained from one of them by the action, which preserves degrees.
%\end{proof}

We say a polynomial $g(X,Y)$ is a factor of rational function $f(X,Y)$ 
if it is a factor of its numerator (in its lowers terms).

The following result and its proof are motivated by a result in~\cite{Z}.

\begin{thm}
\label{deg-bound}
%\todoI{Added `nonconstant'}
Let $K$ be a field of positive characteristic $p$ and let 
$$f(X) = \sum_{i=1}^t a_iX^{n_i} \in K(X)$$
be a nonconstant Laurent polynomial over $K$ with $a_i \ne 0$ and nonzero integer
exponents $n_1<\ldots<n_t$ with $n_t \ge |n_i|$, $i=1, \ldots, t$.
If $h(X,Y)$ is an irreducible polynomial factor of $f(X)-f(Y)$ of degree $d$
not of the form $X -\alpha Y$ or $XY-\alpha$, $\alpha \in K$. Then
$d \gg \min \{p/n_t, \sqrt{n_t/t^2}\}$.
\end{thm}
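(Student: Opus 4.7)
The plan is to realize the curve $h=0$ as an irreducible curve on which $f(X)=f(Y)$ holds, translate this into an $S$-unit equation in its function field, and then apply a Brownawell--Masser type height bound for $S$-units (with Voloch's refinement in positive characteristic), in the spirit of~\cite{Z}.

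I regard $\{h=0\}$ as an irreducible curve inside $\mathbb{P}^1 \times \mathbb{P}^1$ of bidegree $(d_X,d_Y)$ with $d_X, d_Y \le d$, and pass to its smooth projective model $\widetilde C$, whose genus satisfies $g \le (d_X-1)(d_Y-1) \le (d-1)^{2}$. Writing $x, y \in K(\widetilde C)^{*}$ for the coordinate functions, the identity $f(x)=f(y)$ on $\widetilde C$ becomes the vanishing sum of $2t$ $S$-units
\[
\sum_{i=1}^{t} a_i x^{n_i} \;-\; \sum_{i=1}^{t} a_i y^{n_i} \;=\; 0,
\]
where $S$ is the set of places at which $x$ or $y$ has a zero or a pole, so $|S|\le 2(\deg x+\deg y)\le 4d$.

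The exclusion of factors $X-\alpha Y$ forces $x/y\notin K$, hence
$H\!\bigl((x/y)^{n_t}\bigr)=n_t\,H(x/y)\ge n_t$,
where $H$ denotes the usual function field height (degree of pole divisor). Since $(x/y)^{n_t}$ is the quotient of two of our $S$-units, the Brownawell--Masser bound applied to the $(2t)$-term relation yields, in the generic case (no vanishing proper subsum, no Frobenius degeneration),
\[
n_t \;\le\; H\!\bigl((x/y)^{n_t}\bigr) \;\le\; C(t)\,(|S|+2g-2) \;\ll\; t^{2}d^{2},
\]
and hence $d \gg \sqrt{n_t}/t$, which matches the second term in the claimed minimum.

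It remains to dispose of the degenerate configurations. A vanishing proper subsum among the $2t$ monomials splits the equation into strictly shorter $S$-unit relations, to which the same argument applies by induction on $t$; the dual exclusion $h\ne XY-\alpha$ rules out the analogous monomial degeneration $xy\in K$ that could otherwise appear in these sub-arguments. The harder degeneration, and the source of the term $p/n_t$, is the Frobenius degeneration in characteristic~$p$: if the height bound fails, it must be because some ratio of the units is a $p$-th power in $K(\widetilde C)$, forcing $p$ to divide the relevant exponents $n_i$; iterating the extraction of $p$-th roots can be performed at most $O(\log_p n_t)$ times before the tame bound takes over, which is what produces the factor $p/n_t$. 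Matching this analysis precisely against the quantitative form of the $S$-unit bound in characteristic~$p$, while keeping track of the constants depending on $t$, will be the main technical obstacle.
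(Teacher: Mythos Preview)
Your overall strategy---pass to the smooth model of $h=0$, view $f(x)=f(y)$ as an $S$-unit relation among the monomials $x^{n_i},y^{n_i}$, and bound heights---is exactly the paper's approach, and your ``tame'' branch (no vanishing subsum, no Frobenius obstruction) reproduces the paper's conclusion $n_t\ll t^2 d^2$. The paper handles subsums by extracting a \emph{minimal} linear relation $u_1=\sum_{i\ge 2}c_iu_i$ with $u_1=x^{n_t}$ among the monomials; minimality with $m=2$ is then shown to force $h=X-\alpha Y$ or $XY-\alpha$, which is a little cleaner than your induction but morally equivalent.

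The real gap is your treatment of the characteristic~$p$ obstruction. Your claim that failure of the height bound forces some ratio of the $S$-units to be a $p$-th power, ``hence $p\mid n_i$'', is not justified: on the curve $\widetilde C$ the coordinate functions $x,y$ need not be $p$-th-power-free, so $x^{n_i}$ can be a $p$-th power without $p\mid n_i$, and in any case the Wronskian degeneration in Brownawell--Masser is a condition on linear independence of derivatives, not a simple $p$-th-power condition on individual ratios. The iterated extraction of $p$-th roots you sketch does not obviously preserve the structure of the problem, and it is unclear how ``at most $O(\log_p n_t)$ extractions'' produces the term $p/n_t$.

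The paper avoids all of this with a single clean dichotomy. One considers the morphism $\widetilde C\to\mathbb{P}^{m-1}$ given by $(u_1:\cdots:u_{m-1})$; its degree is at most $3dn_t$. If $3dn_t\ge p$ we are already done, since then $d\ge p/(3n_t)$. If $3dn_t<p$, then by St\"ohr--Voloch~\cite{SV} the morphism has classical orders, so Voloch's function-field abc theorem~\cite[Theorem~4]{Vabc} applies \emph{with the same proof as in characteristic~$0$}, yielding
\[
n_t\le \deg u_1\le \tfrac{m(m+1)}{2}\bigl(d(d-3)+3d\bigr)\ll t^2 d^2.
\]
Thus the term $p/n_t$ arises not from an iterative $p$-th-root argument but simply from the threshold $3dn_t\lessgtr p$ that decides whether the classical-orders hypothesis holds. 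This is the missing idea in your proposal.
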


\begin{proof}
Let $\cX$ be a smooth model of the curve $h=0$. The genus of $\cX$ is at most
$(d-1)(d-2)/2$. On $\cX$, the functions $x$ and $y$ have at most $d$ zeros and $d$
poles (on the line at infinity) so they are $S$-units for some set $S$ 
of places of $\cX$ with $\# S \le 3d$.
Consider the functions $x^{n_i},y^{n_i}$, $i=1,\ldots,t$ which are also $S$-units.
Let $u_1=x^{n_t},u_2\ldots,u_m$ be a subset of these functions such that
$$
u_1 = \sum_{i=2}^m c_iu_i, \qquad c_i \in K, 
$$ 
and $m$ minimal. Note that $m \le 2t$ as the equation $f(x)-f(y)=0$ yields a relation of this form with $m=2t$ but may not be
minimal. Note also that $m>1$. 
%\todoI{DG's queery about ``degree considerations''? }
If $m=2$, then $u_2$ is a power of $y$ as, otherwise $h$ would be a 
polynomial in $X$ which is clearly not possible. Let $u_2=y^{n_j}$.
As, on the curve $h=0$, we have $x^{n_t} = c_2 y^{n_j}$
we must have $n_j \ne 0$ and $y = cx^{n_t/n_j}$ for some $c$ (as algebraic
functions) and plugging this into $f(x)-f(y)=0$ and comparing powers of
$x$ yields $n_j = n_t$ or $n_1$ (the latter only if $n_1=-n_t$)
consequently, $h=X-\alpha Y$ or $h= XY -\alpha$, $\alpha \in K$, 
contrary to hypothesis, so $m \ge 3$. 

%\todoI{Added $(x,y)$ on the RHS. Correct?}
%I don't want the map from the plane to m-1 dim projective space, just from the curve. 

%\todoI{$\(u_1{:}\ldots{:}u_{m-1}\)$ or $\(u_2{:}\ldots{:}u_{m}\)$?}
%If we use u_2,...,u_m instead we get the same thing up to a linear transformation in the proj space

The $u_i$ are functions on $\cX$ so 
$$
\(u_1{:}\ldots{:}u_{m-1}\)
$$  
defines a morphism $\cX \to \mathbb{P}^{m-1}$ of
degree at most $3dn_t$, since each coordinate is a
monomial in $x$ or $y$ or their inverses to a power at most $n_t$. If $3dn_t \ge p$, the 
desired result follows immediately.
If $3dn_t < p$,  then~\cite[Theorem~4]{Vabc} holds with the same proof
in characteristic $p>0$ (as the morphism has classical orders by~\cite[Corollary~1.8]{SV}).
Also $\deg u_1 \ge n_t$ so we get
$$
n_t \le \deg u_1 \le (m(m+1)/2)(d(d-3)+3d) \ll d^2 m^2 \le d^2 t^2
$$ 
proving the desired result. 
\end{proof}

\section{Value sets of sparse polynomials}

Here we only concentrate on the case of a prime field $\F_p$, where $p$ is a
prime.

We start with the following simple application of the 
of the Dirichlet pigeonhole principle (see also the proof of~\cite[Lemma~6]{CFKLLS}).

\begin{lem}
\label{lem:Dirichlet} For an integer $S\ge 1$ and arbitrary integers $n_1, \ldots, n_t$, 
there exists a positive integer $s \le S$, such that 
$$
sn_i \equiv m_i \pmod{p-1} \mand  |m_i|  \ll  p S^{-1/t},\qquad i=1,\ldots, t.
$$
\end{lem}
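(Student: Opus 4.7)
The plan is to apply the classical Dirichlet pigeonhole argument in the torus $(\R/(p-1)\Z)^t$. Consider the $S+1$ points
$$
P_s = \bigl(sn_1 \bmod (p-1),\ \ldots,\ sn_t \bmod (p-1)\bigr), \qquad s = 0,1,\ldots,S,
$$
viewed inside the box $[0,p-1)^t$.

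Next I would partition this box into $N^t$ smaller axis-parallel boxes of equal side length $(p-1)/N$, where $N = \lfloor S^{1/t} \rfloor$. Since $N^t \le S < S+1$, the pigeonhole principle forces two of the points $P_{s_1}, P_{s_2}$ with $0 \le s_1 < s_2 \le S$ to lie in a common sub-box. Setting $s = s_2 - s_1$, we have $1 \le s \le S$, and for each $i=1,\ldots,t$ the residue $sn_i \pmod{p-1}$ admits a representative $m_i$ with
$$
|m_i| \le \frac{p-1}{N} \ll \frac{p}{S^{1/t}},
$$
which is exactly the claimed bound.

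There is no real obstacle here: the argument is a direct application of pigeonhole, and the only minor thing to be careful about is choosing the signed representative in $(-(p-1)/2,(p-1)/2]$ so that the bound on $|m_i|$ comes out with the correct implied constant. The dependence on $t$ enters only through the number of boxes $N^t$, hence through the exponent $1/t$ in the final estimate.
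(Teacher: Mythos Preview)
Your argument is correct and is essentially identical to the paper's own proof: both consider the $S+1$ residue vectors $(sn_1,\ldots,sn_t)\bmod(p-1)$ for $s=0,\ldots,S$, cover $[0,p-1)^t$ by at most $S$ sub-boxes of side length $\asymp pS^{-1/t}$, and take $s=s_2-s_1$ for two indices landing in the same box. The only cosmetic difference is that you write the number of boxes as $N^t$ with $N=\lfloor S^{1/t}\rfloor$, whereas the paper simply says ``at most $S$ cubes of side length $pS^{-1/t}$''; and your remark about signed representatives is unnecessary, since the $m_i$ you obtain is already the difference of two residues lying in a common small box.
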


\begin{proof} We cover the cube $[0,p-1]^t$ by at most $S$ cubes with the side length
$ pS^{-1/t}$. Therefore, at least two of the vectors formed by the residues of modulo $p-1$ 
of the $S+1$ vectors $(sn_1, \ldots, sn_t)$, $ s = 0, \ldots, S$ fall in the same cube. 
Assume they correspond to $S \ge s_1 > s_2 \ge 0$. It is easy to see that $s = s_1-s_2$ 
yields the desired result. 
\end{proof} 

Furthermore, by~\cite[Lemma~7]{CFKLLS} we have:
%\todoI{It feels like an overkil but I don't know how to avoid it?}

%\todoI{DG's suggested to use $t$ but I think we're OK with $r$ too}  
\begin{lem}
\label{lem:SprEq}
For 
$r \ge 2$ given 
elements  $b_1, \ldots, b_r \in \F_p^*$ and
integers $k_1, \ldots, k_r$ in $\Z$
let us denote by $T$ the number of solutions of the equation
$$
\sum_{i=1}^r c_ix^{k_i} = 0, \qquad x \in \F_p^*.
$$
Then
\begin{equation}
\label{SprEqn}
T \le 2 p^{1 - 1/(r-1)} D^{1/(r-1)}
+ O\(p^{1 - 2/(r-1)} D^{2/(r-1)} \),
\end{equation}
where
$$
D =  \min_{1 \le i \le r} \max_{j \ne i} \gcd(k_j - k_i, p-1).
$$
\end{lem}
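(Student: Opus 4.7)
I would reduce the counting of solutions to an estimate on a sparse exponential sum, and then establish that estimate by the Cochrane--Pinner method. By orthogonality of additive characters,
\[
T \;=\; \frac{p-1}{p} \;+\; \frac{1}{p}\sum_{a=1}^{p-1} S(a), \qquad S(a) = \sum_{x\in \F_p^*}\ep\!\(a\sum_{i=1}^r c_i x^{k_i}\).
\]
After a triangle inequality, the bound~\eqref{SprEqn} reduces to a uniform estimate $|S(a)|\le 2p^{1-1/(r-1)}D^{1/(r-1)}+O\(p^{1-2/(r-1)}D^{2/(r-1)}\)$ for $a\ne 0$, which is a known bound on sparse exponential sums.

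To prove this bound on $|S(a)|$, I would proceed as follows. After reindexing so that the minimum in the definition of $D$ is attained at $i^{*} = 1$, I would factor $x^{k_1}$ out of $f$ (which does not change $|S(a)|$) and so assume $k_1 = 0$. Using the translation invariance $S(a) = \sum_{x} \ep(af(ux))$ for any $u \in \F_p^*$, I would average over $u$ and apply H\"older's inequality with exponent $r-1$. This produces a sum in which the exponents $k_j - k_1 = k_j$ appear only through their gcds with $p-1$, each of which is bounded by $D$. A standard application of the Weil bound for the resulting nondegenerate character sums (together with a direct count in the degenerate cases, which accounts for the secondary error term) then completes the estimate.

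The principal obstacle is the arithmetic bookkeeping of how the parameter $D$ propagates through the iterated H\"older/Weil step. The specific form $D = \min_i\max_{j\ne i}\gcd(k_j-k_i,p-1)$ is tailor-made for this argument: the outer minimum guarantees a distinguished pivot $i^{*}$ at which all differences $k_j - k_{i^{*}}$ have gcd with $p-1$ bounded by $D$, so that the H\"older/Weil step controls every resulting auxiliary character sum by the same factor. The exponent $1/(r-1)$ in the final bound then emerges from optimising H\"older's inequality against the number of monomial terms, and the secondary $O\(p^{1-2/(r-1)}D^{2/(r-1)}\)$ term arises precisely from those tuples of exponent-differences where the Weil bound is not directly applicable and must be replaced by a trivial count.
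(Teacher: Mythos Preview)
The paper does not actually prove this lemma: it simply quotes it verbatim from~\cite[Lemma~7]{CFKLLS}. So there is no in-paper argument to compare against; the relevant comparison is with the original proof in~\cite{CFKLLS}, which is a direct argument on the zero count (normalise so that the pivot exponent is $0$, amplify by considering tuples of solutions, and finish with the Weil bound for an auxiliary variety), and does \emph{not} pass through additive-character sums.

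Your route is genuinely different --- you detour through the complete exponential sum $S(a)=\sum_{x}\ep\bigl(a\sum_i c_i x^{k_i}\bigr)$ and then try to bound $|S(a)|$ by a Mordell/H\"older/Weil scheme. Two concerns. First, a small one: the step ``factor $x^{k_1}$ out of $f$ (which does not change $|S(a)|$)'' is not correct as written; replacing $f(x)$ by $x^{-k_1}f(x)$ changes the summand $\ep(af(x))$ and hence $S(a)$. It does, of course, leave the zero count $T$ unchanged, so the fix is to perform this normalisation \emph{before} passing to characters, not after. Second, and more seriously: the uniform bound $|S(a)|\le 2p^{1-1/(r-1)}D^{1/(r-1)}+O(\cdot)$ that you invoke as ``known'' is exactly the strength of the lemma you are trying to prove, and in the literature such sparse exponential-sum estimates are typically \emph{deduced from} zero-count bounds of the CFKLLS type rather than the other way around. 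Your sketch of the H\"older step is also too thin to stand on its own: from the dilation invariance $S(a)=\sum_x\ep(af(ux))$ one only gets back $S(a)$ after averaging over $u$, so the amplification has to be set up at the level of the coefficient tuple $(c_i u^{k_i})_i$ (Mordell's device), and carrying that through to the precise main term $2p^{1-1/(r-1)}D^{1/(r-1)}$ with the stated secondary term requires real work that is not indicated here. As it stands, the proposal either leans on a result equivalent to what is being proved, or leaves the core estimate unproved.
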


We also use that by the Cauchy inequality
\begin{equation}
\label{eq:Cauchy}
\begin{split}
p^2 & =\(\sum_{a \in \F_p} \#\{x \in \F_p:~f(x) =a\}\)^2  \\ & \le V(f) \sum_{a \in \cV(f)}\(\#\{x \in \F_p:~f(x) =a\}\)^2 \\ & = V(f) \#\{(x,y)\in \F_p^2:~f(x) =f(y)\}, 
\end{split}
\end{equation}
see also~\cite[Lemma~1]{V}.

We are now ready to estimate $V(f)$.

\begin{thm}
\label{thm:sparse} For any $t \ge 2$ there is a constant $c(t) > 0$ such that for 
any primes  $p$  and integers $1\le n_1, \ldots, n_t<p-1$  
integers with  %\todoI{One condition is gone}
\begin{itemize} 
\item[(i)] $\max_{1 \le j < i \le t} \gcd(n_j - n_i, p-1) \le c(t) p$,
%\item[(ii)] $ \gcd(n_1-n_t, \ldots,  n_{t-1}-n_t, p-1 ) \le  c(t) p^{3/(3t+4)}$,
%\todoI{Revised condition} 
\item[(ii)] $ \gcd(n_1, \ldots,  ,n_t, p-1 )=1$, 
%% =1 \le c(t) p^{4/(3t+4)}$. 
\end{itemize}
for any polynomial
$$ 
f(X) = \sum_{i=1}^t a_iX^{n_i} \in \F_p[X] \quad \text{with} \ a_i \ne 0,\ i =1, \ldots, t,
$$
we have $V(f) \gg \min\{p^{2/3}, p^{4/(3t+4)}\}$.
\end{thm}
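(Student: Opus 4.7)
The plan is to follow the three-step strategy outlined in the introduction: a monomial reduction via Lemma \ref{lem:Dirichlet}, a factorization analysis via Theorem \ref{deg-bound}, and a Cauchy--Schwarz bound of the form \eqref{eq:Cauchy}.

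First, choose a parameter $S \ge 1$ to be optimized at the end and apply Lemma \ref{lem:Dirichlet} to obtain a positive integer $s \le S$ together with integers $m_1,\ldots,m_t$ with $sn_i \equiv m_i \pmod{p-1}$ and $|m_i| \ll pS^{-1/t}$. Using condition (ii) and a mild refinement of the pigeonhole argument (taking $s$ coprime to $p-1$ among the many choices produced by Dirichlet), I would arrange that $\gcd(s, p-1) = 1$, so that $y \mapsto y^s$ is a bijection of $\F_p^*$. This lets one replace $f$ by the Laurent polynomial $g(Y) = \sum_{i=1}^{t} a_i Y^{m_i}$, which satisfies $g(y) = f(y^s)$ on $\F_p^*$; hence $V(f) \ge V_0(g) := \#\{g(y):~y \in \F_p^*\}$.

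Applying the Cauchy--Schwarz inequality \eqref{eq:Cauchy} to $g$ over $\F_p^*$ reduces the problem to an upper bound for $N := \#\{(x,y) \in (\F_p^*)^2 :~ g(x) = g(y)\}$. To bound $N$, set $M = \max_i |m_i|$ and consider $F(X,Y) = X^M Y^M (g(X) - g(Y))$, a polynomial of bidegree at most $2M$. Factor $F$ into irreducibles over $\overline{\F_p}$. By Theorem \ref{deg-bound} applied to $g$, any irreducible factor that is not of the trivial form $X - \alpha Y$ or $XY - \alpha$ has degree $\gg D := \min\{p/M, \sqrt{M}/t\}$. Conditions (i) and (ii) yield an $O(1)$ bound on the number of trivial factors, via a direct analysis of when the identities $g(\alpha X) = g(X)$ or $g(\alpha/X) = g(X)$ can hold; such factors contribute $O(p)$ to $N$. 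For the non-trivial components, the results of \cite{V} should provide a point count sharper than the naive Weil estimate. Combining the pieces gives an estimate of the shape $N \ll p + (M/D)\,p + E$, with $E$ a lower-order term controlled by \cite{V}. Substituting $M \ll pS^{-1/t}$ and optimizing in $S$ leads to two regimes: when $S$ can be chosen so that $M \sim (tp)^{2/3}$, one reaches $N \ll p^{4/3}$ and hence $V(f) \gg p^{2/3}$; otherwise the constraint $S \le p-1$ forces $M \gg p^{1-1/t}$, and a second optimization yields the weaker bound $V(f) \gg p^{4/(3t+4)}$. The minimum of the two gives the claimed conclusion.

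The main obstacle I foresee is the point count on non-trivial components. Theorem \ref{deg-bound} keeps their degree above $D$, but the Weil correction $O(d^2 \sqrt{p})$ per component of degree $d$ becomes prohibitive once $d$ approaches $M$. The critical ingredient is therefore a bound from \cite{V} (in the spirit of St\"ohr--Voloch, as already exploited in the proof of Theorem \ref{deg-bound}) that sharpens Weil in the relevant regime; a secondary technicality is the count of trivial factors via conditions (i) and (ii), which must be arranged so that the $O(p)$ contribution does not swallow the main term.
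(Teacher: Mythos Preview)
Your three-step outline matches the paper's, but there is a genuine gap at the monomial reduction. You assert that ``a mild refinement'' of the pigeonhole argument lets you take $s\le S$ with $\gcd(s,p-1)=1$. The Dirichlet box argument does not give this: the $s$ it produces is a \emph{difference} $s_1-s_2$ of two indices landing in the same box, and nothing forces such a difference to be coprime to $p-1$ (restricting the indices themselves to be coprime to $p-1$ does not help, since differences of units need not be units). Without $\gcd(s,p-1)=1$ two things break. First, several reduced exponents $m_i$ may coincide, and then the coefficients in $g$ may cancel, so $g$ can collapse to a constant or lose terms. Second, the factors $X-\alpha Y$ of $g(X)-g(Y)$ correspond to $\alpha$ with $\alpha^{\gcd(m_1,\ldots,m_t,p-1)}=1$, and under condition~(ii) one has $\gcd(m_1,\ldots,m_t,p-1)=\gcd(s,p-1)$; hence there can be up to $\gcd(s,p-1)\le S$ such factors, not $O(1)$, and their contribution to $N$ is of order $pS$, not $O(p)$.

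The paper does \emph{not} try to force $\gcd(s,p-1)=1$. Instead it uses condition~(i) together with Lemma~\ref{lem:SprEq} to choose $c\in\F_p^*$ with $\sum_{i\in\cI}a_ic^{n_i}\ne 0$ for every nonempty $\cI\subseteq\{1,\ldots,t\}$, and passes to $f(cX^s)$; this is precisely what guarantees that colliding exponents never annihilate, so $g$ keeps exactly as many terms as there are distinct $m_i$ and is nonconstant by condition~(ii). That is the actual role of hypothesis~(i), not the control of trivial factors you propose. The up to $S$ trivial factors are simply accepted, contributing $N_0\ll pS$, and this term is carried into the optimisation. For the remaining factors the paper splits by degree: those with $d_j<p^{1/4}$ get the Weil bound $O(p)$ each (and there are $\ll pS^{-2/t}$ of them by Theorem~\ref{deg-bound}), while those with $d_j\ge p^{1/4}$ use the bound $O(d_j^{4/3}p^{2/3})$ from~\cite{V}, summed via convexity of $z\mapsto z^{4/3}$ to $\ll m_t^{4/3}p^{2/3}\ll p^2S^{-4/(3t)}$. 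Balancing $pS$ against $p^2S^{-4/(3t)}$ fixes $S=\lceil p^{3t/(3t+4)}\rceil$ and yields the exponent $4/(3t+4)$. Your sketch $N\ll p+(M/D)p+E$ hides exactly this split; in particular the middle term presumes Weil is available for all nontrivial components, which fails once their degrees exceed $p^{1/4}$, and the $pS$ contribution from trivial factors is missing altogether.
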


\begin{proof} We chose the integer parameter 
\begin{equation}
\label{eq:S def}
S =  \rf{p^{3t/(3t+4)}}, 
\end{equation}
and define $s$ and $m_1, \ldots, m_t$ as in Lemma~\ref{lem:Dirichlet}. 

We see from  Lemma~\ref{lem:SprEq} that for a sufficiently small $c(t)$ the 
condition~(i) guarantees that  there is $c \in \F_p^*$ 
such that  
\begin{equation}
\label{eq:Nonvanish}
\sum_{i\in \cI}  a_ic^{n_i} \ne 0,  
\end{equation}
for  all non-empty sets $\cI \subseteq \{1, \ldots, t\}$. 

We now fix some $c \in \F_p^*$ satisfying~\eqref{eq:Nonvanish} and for the above $s$, we
consider the polynomial $f(cX^s)$, then   
the values of $f(cX^s)$ in  $\F_p^*$ coincide with those of 
$$g(X) = \sum_{i=1}^t b_i X^{m_i} \quad \text{with} \  b_i = a_ic^{n_i}, \ i=1,\ldots, t,
$$ 
and, after collecting 
like powers of $X$, we consider two situations: when $g(X)$ is a constant functions
and when $g(X)$ is of positive degree.

We observe that due to the condition~\eqref{eq:Nonvanish} the number of terms
of $g(X)$ is exactly the same as the number of distinct values among 
$m_1, \ldots,  m_t$.

%\todoI{This part is new}
If $g(X)$ is a constant  then $m_1 = \ldots = m_t = 0$  and thus using that 
$sn_i \equiv m_i \equiv  0\pmod{p-1}$, $i =1, \ldots, t$,  we also see that 
$$
s\gcd(n_1, \ldots,  ,n_t, p-1 )   \equiv  0\pmod{p-1}.
$$
This implies that 
$$
S \ge s \ge \frac{p-1}{\gcd(n_1, \ldots,  ,n_t, p-1 )} = p-1
$$
which is impossible for  the above choice of $S$, due to the  condition~(ii).

So we can now assume  that $g(X)$ is a nontrivial Laurent polynomial.

%\todoI{Moved this here}
Furthermore, making, if necessary,  the change of variable $X \to X^{-1}$, without loss of generality, we can assume that 
$$
m_t  =  \max\{|m_1|, \ldots, |m_t|\} > 0.
$$

We  now derive  a bound on
$$
N = \#\left\{(x,y) \in \F_p^2:~g(x)=g(y)\right\}, 
$$
which is based on  Theorem~\ref{deg-bound}. 

If  $\sqrt{m_t} \le p/m_t$  then $m_t \le p^{2/3}$  and the result is trivial.
We immediately obtain 
\begin{equation}
\label{eq:N 5/3}
N \ll m_tp \ll p^{5/3}.
\end{equation} %\todoI{Moved this here}
Hence we now assume that 
\begin{equation}
\label{eq:mt 2/3}
\sqrt{m_t} > p/m_t.
\end{equation}

First, in order to apply Theorem~\ref{deg-bound}, we need to  investigate the factors of $g(X) - g(Y)$ of the form  $X-\alpha Y$ or 
of the form $XY-\alpha$ with $\alpha$ in the algebraic 
closure of $\F_p$.

In fact for an application to $N$ only factors of these forms with $\alpha \in \F_p$ 
are relevant. 

Let   $\cG_s \subseteq \F_p^*$ be the multiplicative subgroup of elements $\alpha \in \F_p$ 
with $\alpha^s = 1$. Note that $\cG_s$ is a subgroup of elements of multiplicative order $\gcd(s,p-1)$, and thus
$$
\# \cG_s  = \gcd(s,p-1).
$$
We show that for some  $\gamma \in \F_p$  factors of $g(X) - g(Y)$ of the form  
$X-\alpha Y$ and $XY-\alpha$ we have $\alpha \in \cG_s$ 
and $\alpha \in \gamma \cG_s$, respectively.

Clearly, if $g(X)-g(Y)$ has a   factor of the form $X-\alpha Y$  then  $g(X)-g(\alpha X)$ is identical to zero.
Since $g(X)$ is not constant, we see that $\alpha\ne0$.
Hence, denoting by $m$ the multiplicative order of  $\alpha$ in $ \F_p^*$ 
we see that by the  condition~(ii) we have
$$
m \mid \gcd(m_1, \ldots, m_t, p-1) = \gcd(s n_1, \ldots, s n_t, p-1) = \gcd(s,p-1). 
$$
Hence $\alpha \in \cG_s$. 

 The factors 
of $g(X)-g(Y)$ of the form $XY-\alpha$, $\alpha \in K$ imply that $g(X)-g(\alpha/X)$
 is identical to zero. This may occur only  if for every $i=1,\ldots, t$
there exists $j =1,\ldots, t$ with $m_i = -m_j$ and $\alpha^{m_i} = b_i/b_j$.
In particular, there is some $\beta \in \F_p^*$ (which may depend on $m_1, \ldots, m_t$)
such that
$$
\alpha^{ \gcd(m_1, \ldots, m_t,p-1)} = \beta 
$$
which puts $\alpha$ in some fixed coset $\cG_s$.
Hence there are at most $s \le S$ such values of $\alpha$  which contribute at most 
\begin{equation}
\label{eq:N0}
N_0 \ll  pS
\end{equation}
to $N$.

We proceed to get an upper
estimate on $N$ and notice that any further contribution to $N$ 
may only come from factors  of $g(X)-g(Y)$
not of the form $X-\alpha Y$ or $XY-\alpha$.

As $m_t \ll p S^{-1/t}$, 
all such factors $h_1, \ldots, h_k$ of $g(X)-g(Y)$ have degree $d_j = \deg h_j$  
for which, then, by Theorem~\ref{deg-bound} and also using~\eqref{eq:mt 2/3} we derive
$$d_j \gg  \min \{p/m_t, \sqrt{m_t/t^2}\} =  p/m_t \ge S^{1/t}, \quad j =1, \ldots, k.
$$ and there are  %\todoI{Reworded and recalculated}
$$
k  \le \frac{2 m_t}{ \min\{d_1, \ldots, d_k\}} \ll p S^{-2/t}
$$ 
such factors. 

Let $N_1$ and $N_2$ be contributions to $N$ from the factors $h_j$ of degree
$d_j < p^{1/4}$ and $d_j \ge p^{1/4}$, respectively.

If  a factor $h$ has degree
$d < p^{1/4}$ the number of rational points on $h=0$ is $O(p)$ by the Weil bound (see~\cite{Lor}), %\todoI{Is Lorenzini OK?}
%\todoFy{I don't think the Weil bound needs a reference but, if you do, that's fine}
so those factors all together
contribute 
\begin{equation}
\label{eq:N1}
N_1 \ll   \sum_{\substack{j=1\\ d_j < p^{1/4}}}^k  p \le  k p \ll p^2 S^{-2/t}.
\end{equation}

The factors with degree $d\ge p^{1/4}$
contribute $O(d^{4/3}p^{2/3})$ by~\cite[Theorem~(i)]{V} and, in total they contribute
$$
N_2 = \sum_{\substack{j=1\\ d_j\ge p^{1/4}}}^k d_j^{4/3}p^{2/3} . 
$$
Using the convexity of the function $z \mapsto z^{4/3}$ and then extending the range
of summation to polynomials of all degrees, we obtain
\begin{equation}
\label{eq:N2}
N_2 \le p^{2/3} \( \sum_{ j=1}^k d_j\)^{4/3} \le m_t^{4/3}p^{2/3} \ll p^2 S^{-4/(3t)}.
\end{equation}

%\todoI{$n_t$ should be $m_t$ here?}
%Correct. fixed.

Combining~\eqref{eq:N0}, \eqref{eq:N1} and~\eqref{eq:N2} we obtain  
$$
N \ll  pS + p^2 S^{-4/(3t)}
$$
which with the choice $S$ as in~\eqref{eq:S def}, becomes 
\begin{equation}
\label{eq:N 4/3t}
N \ll  p^{(6t+4)/(3t+4)}.
\end{equation}

Combining~\eqref{eq:N 5/3} and~\eqref{eq:N 4/3t} with~\eqref{eq:Cauchy}
we derive the result. 
\end{proof}
 
We now consider the case of binomials in more detail. 
%\todoI{From now on it's mostly re-written} 
\begin{thm}
\label{thm:bin}
If $f(X) = X +aX^n \in \F_p[X]$ and 
$$
d=\gcd(n,p-1) \mand 
e=\gcd(n-1,p-1),
$$
then $$
V(f) \gg \max\{d,p/d, e, p/e\}.
$$
\end{thm}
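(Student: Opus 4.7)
The plan is to establish the four lower bounds $V(f) \gg d$, $V(f) \gg e$, $V(f) \gg p/d$, and $V(f) \gg p/e$ separately; their maximum gives the theorem.

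The bounds $V(f) \gg d$ and $V(f) \gg e$ follow from elementary symmetry on the domain. Let $\mu_d,\mu_e \subseteq \F_p^*$ denote the groups of $d$-th and $e$-th roots of unity. Since $d \mid n$, every $\zeta \in \mu_d$ satisfies $\zeta^n = 1$, hence
$$f(\zeta x) = \zeta x + a x^n = f(x) + (\zeta - 1) x.$$
For any fixed $x \ne 0$ the $d$ values $f(\zeta x)$ are pairwise distinct, giving $V(f) \ge d$. Similarly, $e \mid n-1$ yields $\zeta^n = \zeta$ for $\zeta \in \mu_e$, hence $f(\zeta x) = \zeta f(x)$, so $V(f)$ is $\mu_e$-invariant; the $\mu_e$-orbit of any nonzero element of $V(f)$ (which exists since $f$ has degree $n$ and at most $n$ zeros) contributes $e$ distinct values, giving $V(f) \ge e$.

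For $V(f) \gg p/d$ and $V(f) \gg p/e$, I would apply the Cauchy inequality \eqref{eq:Cauchy} in the form $V(f) \ge p^2/N(f)$, with $N(f) = \#\{(x,y)\in\F_p^2:\ f(x)=f(y)\}$, and bound $N(f) \ll p$. Writing $N(f) = p + R$ for the off-diagonal contribution, I would parametrize by $y = tx$ with $x \ne 0$ and $t \ne 1$: the collision $f(x) = f(tx)$ becomes $(1-t) = a(t^n-1)x^{n-1}$. For $t \in \mu_d \setminus \{1\}$ (so $t^n=1$) the right side vanishes while the left does not, yielding no solutions; for $t \notin \mu_d$ one obtains
$$x^{n-1} = -\frac{1}{a(1 + t + \cdots + t^{n-1})},$$
whose number of solutions in $\F_p^*$ is $0$ or $e$ according to whether the right side lies in the index-$e$ subgroup $H \subseteq \F_p^*$ of $(n-1)$-th powers. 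Counting favorable $t$ via the multiplicative character of order $e$ with kernel $H$, together with a Weil-type bound for the resulting character sum over a rational function, then gives $N(f) \ll p$, hence $V(f) \gg p$, which dominates each of $d,e,p/d,p/e$ in the regime where the character-sum estimate is effective.

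The main obstacle is the range of large $n$, where the Weil error term can swamp the main term. Here I would fall back on the sharper point-count for curves from~\cite{V} (as used in the proof of Theorem~\ref{thm:sparse}), applied to the reduced factor $P(X,Y) := (f(X) - f(Y))/(X - Y)$. A direct check shows that for a binomial $f(X) = X + a X^n$ with $a \ne 0$, the polynomial $P$ admits no factor of the form $X - \alpha Y$ or $XY - \alpha$, so Theorem~\ref{deg-bound} forces every irreducible factor of $P$ to have degree $\gg \min(p/n,\sqrt{n})$. Bounding the rational points on each such factor and reassembling via the Cauchy inequality should then recover the remaining bounds $V(f) \gg p/d$ and $V(f) \gg p/e$ in the problematic range.
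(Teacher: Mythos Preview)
Your arguments for $V(f)\ge d$ and $V(f)\ge e$ are essentially the paper's (the paper phrases them as ``$f(u)=u+a$ for $u^d=1$'' and ``$f(u)=u(1+ac)$ for $u^e=c$'', which is the same symmetry you describe).

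For $V(f)\gg p/d$ and $V(f)\gg p/e$ your route diverges from the paper's, and there is a genuine gap. Your primary tool is a Weil bound for the character sum $\sum_t \chi^j\bigl((t-1)/(t^n-1)\bigr)$; but the rational function here has degree $n-1$, so the Weil error is $O(n\sqrt p)$, not $O(\sqrt p)$. This already swamps the main term once $n\gg\sqrt p$, so ``$N(f)\ll p$'' is not available in the interesting range. Your fallback to Theorem~\ref{deg-bound} has the same defect: applied to $f$ directly it gives factor degrees $\gg\min\{p/n,\sqrt{n}\}$, and for $n$ anywhere near $p$ the quantity $p/n$ is $O(1)$, so the bound is vacuous. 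Even in the favourable range, the resulting estimate on $N(f)$ is of the shape $p\sqrt n$ or $p\cdot n^2/p$, which yields bounds in terms of $n$ rather than the arithmetically defined quantities $d=\gcd(n,p-1)$ and $e=\gcd(n-1,p-1)$; there is no mechanism in your argument that converts back to $p/d$ or $p/e$.

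What you are missing is the paper's key device: a monomial change of variable that reduces the degree \emph{before} any counting. One chooses $r\le (p-1)/d$ with $rn\equiv d\pmod{p-1}$ and sets $x=u^r$, so that $f(u^r)=u^r+au^d$ has degree $\max\{r,d\}\le (p-1)/d$; similarly $s\le (p-1)/e$ with $s(n-1)\equiv e\pmod{p-1}$ gives $f(u^s)=u^s(1+au^e)$. After this reduction the collision count is bounded by a completely elementary degree argument (no Weil bound, no Theorem~\ref{deg-bound}), and the Cauchy inequality~\eqref{eq:Cauchy} finishes. Without such a substitution, the degree $n$ --- which can be as large as $p-2$ --- cannot be tamed by either of your proposed methods.
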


\begin{proof} Assume that $d \le p^{1/2}$.
There exists  a positive $r \le (p-1)/d$ with $rn/d \equiv 1 \pmod{(p-1)/d}$ so that
$rn  \equiv d  \pmod{p-1}$. Hence, if $x=u^r$, then $f(x) =g(u)$ where $g(u) = u^r + au^{d}$. 

The equation $g(u)=g(v)$ has degree $\max\{r,d\}$ in $v$ so at most
%becomes, with $v=tu$ the same as $u^r + au^{d} = t^ru^r + au^{d}t^{d}$ and we get
%$1+ au^d =  t^r + au^{d}t^{d+r}$, which has at most 
$$p\max\{r, d\} \le p\max\{(p-1)/d, d\} \le p^2/d
$$ 
solutions, as $d \le p^{1/2}$. 
By~\eqref{eq:Cauchy}, we have $V(f) \gg p^2/pd = p/d$. If $d > p^{1/2}$, note that $d > p/d$.

Now, regardless of the size of $d$, notice that for every $u$  with $u^d =1$
the values $f(u) = u+a$  are pairwise distinct. 
Thus $V(f) \ge d$.

Similarly, there exists $s$ with $s(n-1)/e \equiv 1 \pmod{(p-1)/e}$ so that
$sn  \equiv e+s \pmod{p-1}$. Hence, if $x=u^s$, then $f(x) = h(u)$ where $h(u) =u^s + au^{e+s}$. The equation $h(u)=h(v)$
becomes, with $v=tu$ the same as $u^s + au^{e+s} = t^su^s + au^{e+s}t^{e+s}$ and we get that either $u=0$ or 
$1+ au^e =  t^s + au^{e}t^{e+s}$, which has at most $pe$ solutions. 
By~\eqref{eq:Cauchy}, we have $V(f) \gg p^2/pe = p/e$.
 
Furthermore, notice that for every $u$  with $u^e =c$, where $c$ is a fixed non-zero $e$-th power with $1+ac \ne 0$,
the values $f(u) = u(1+ac)$  are pairwise distinct, and we can also add $f(0) = 0$. 
Thus $V(f) \ge e$.

The result now follows. 
\end{proof} 

We now immediately obtain:

\begin{cor}
\label{thm:bin unif}
If $f(X) = X +aX^n \in \F_p[X]$  then $V(f) \ge  p^{1/2}$.
\end{cor}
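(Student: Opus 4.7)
The plan is to derive the corollary directly from Theorem~\ref{thm:bin} using an elementary AM--GM/pigeonhole observation. Theorem~\ref{thm:bin} gives
$$V(f) \gg \max\{d, p/d, e, p/e\},$$
where $d=\gcd(n,p-1)$ and $e=\gcd(n-1,p-1)$. Since $d$ and $p/d$ are positive reals with product $p$, at least one of them is $\ge \sqrt{p}$; equivalently $\max\{d, p/d\} \ge \sqrt{d \cdot (p/d)} = p^{1/2}$. Thus the right-hand side in Theorem~\ref{thm:bin} is already $\gg p^{1/2}$, regardless of the value of $n$.

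To make the constant explicit (since the corollary is stated with $\ge$ rather than $\gg$), I would trace through the cases in the proof of Theorem~\ref{thm:bin} rather than appeal to it as a black box. The inequality $V(f) \ge d$ was established with constant $1$ by exhibiting the $d$ pairwise distinct values $f(u) = u+a$ for $u$ ranging over the $d$-th roots of unity. If $d \ge p^{1/2}$ this already yields the claim. Otherwise $d < p^{1/2}$, and the Cauchy--Schwarz step in Theorem~\ref{thm:bin} produces $V(f) \gg p/d > p^{1/2}$; here the implicit constant is harmless because for any fixed constant $c$ the inequality $cp/d \ge p^{1/2}$ holds for all sufficiently large $p$, while for the finitely many small $p$ the bound $V(f) \ge p^{1/2}$ is either vacuous or can be absorbed into the constant (or noted to follow from the trivial bound $V(f) \ge 1$ when $p \le $ some threshold).

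There is no real obstacle here: the corollary is essentially a restatement of Theorem~\ref{thm:bin} combined with the trivial inequality $\max(x, p/x) \ge \sqrt{p}$, and its only content is the observation that whatever $d$ turns out to be, the pair of bounds $V(f) \ge d$ and $V(f) \gg p/d$ cannot simultaneously be smaller than $p^{1/2}$. The one point requiring mild care is the passage from the asymptotic $\gg$ in Theorem~\ref{thm:bin} to the unqualified $\ge$ in the corollary, which is handled as above by invoking $V(f) \ge d$ with explicit constant $1$ in the range $d \ge p^{1/2}$ and the $\gg p/d$ bound in the complementary range.
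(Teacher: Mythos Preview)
Your derivation from Theorem~\ref{thm:bin} via the inequality $\max\{d,\,p/d\}\ge\sqrt{p}$ is exactly the paper's intended argument: the paper gives no proof beyond ``We now immediately obtain''.

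One small gap worth flagging: your attempt to upgrade $\gg$ to the literal $\ge$ does not go through as written. In the range $d<p^{1/2}$ you assert that ``for any fixed constant $c$ the inequality $cp/d\ge p^{1/2}$ holds for all sufficiently large $p$'', but $d=\gcd(n,p-1)$ varies with $p$ and $n$ and can sit arbitrarily close to $p^{1/2}$ for arbitrarily large $p$, so $cp/d$ can stay below $p^{1/2}$ whenever $c<1$. Thus from Theorem~\ref{thm:bin} as a black box you genuinely only get $V(f)\gg p^{1/2}$. The paper does not address this either, and the corollary should probably be read with $\gg$; if one insists on the sharp inequality, one has to go back into the proof of Theorem~\ref{thm:bin} and track the Cauchy--Schwarz constant explicitly rather than appeal to an asymptotic-in-$p$ argument.
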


%\begin{thm}
%\label{thm:bin}
%If $f(X) = X +aX^n$ and $d=\gcd(n-1,p-1)$, then $V(f) \ge \max\{d,p/d\} \ge p^{1/2}$.
%\todoI{One can also use the same argument with $D = \gcd(n,p-1)$}
%\end{thm}
%
%\begin{proof}
%There exists $s$ with $s(n-1)/d \equiv 1 \pmod{(p-1)/d}$ so
%$sn  \equiv d+s \pmod{p-1}$. So, if $x=u^s$, then $f(x) = u^s + au^{d+s} = g(u)$. The equation $g(u)=g(v)$
%becomes, with $v=tu$ the same as $u^s + au^{d+s} = t^su^s + au^{d+s}t^{d+s}$ and we get
%$1+ au^d =  t^s + au^{d}t^{d+s}$, which has at most $pd$ solutions, including those with $u=tu$, i.e. $t=1$. 
%By~\eqref{eq:Cauchy}, we have $V(f) \ge p^2/pd = p/d$.
% 
%Notice that for every $u$  with $u^d =c$ where $c$ is a fixed non-zero $d$-th power with $1+ac \ne 0$,
%the values $f(u) = u(1+ac)$  are pairwise distinct, and we can also add $f(0) = 0$. 
%Thus $V(f) \ge d$ and the result follows. 
%\end{proof} 
%\todoI{If indeed we can do the same with
%$e=(n,p-1)$ then perhaps we can show that for almost all 
%$p$ we have $V(f)>p^{1/2+c}$ for some fixed $c>0$}

\section{Comments} 

%\todoI{Some changes here} 
Theorem~\ref{thm:bin} extends, with the same proof, for arbitrary finite
fields. On the other hand, Theorem~\ref{thm:sparse} is false as stated 
for arbitrary finite fields. Indeed, the trace polynomial  
$T(X)= X + X^p + \cdots + X^{p^{t-1}}$ has $T(\F_{p^t}) = \F_p$, so
$V(T) = q^{1/t}$ if $q=p^t$. The trace polynomial can be combined with
a monomial $X^{(q-1)/d}$ for some divisor $d$ to break the linearity of $T(X)$.
% (to guarantee that the condition~(ii)
%of Theorem~\ref{thm:sparse} is satisfied; the conditions~(i) and~(iii) are satisfied in 
%any case).
 Clearly, for $f(X) = X^{(q-1)/d} + T(X)$ we have $V(f) \le (d+1)p$.

We note that one can use  Lemma~\ref{lem:SprEq} directly in a combination 
with~\eqref{eq:Cauchy}. However, in the best possible scenario this approach can only 
give a lower bound of order $p^{1/(t-1)}$, which is always weaker than that of
Theorem~\ref{thm:sparse} for $t \ge 9$.  

If $p$ is a prime such that $(p-1)/2$ is also prime, then it follows from Theorem~\ref{thm:bin}  that, for $f(X) = X +aX^n$, 
$a \ne 0$, $2 \le n \le p-1$, we have $V(f) \ge (p-1)/2$. It can be proved that equality is
attained if $n = p-2$ and $a$ is
a non-square. In this case the pre-image of non-zero elements of $\F_p$ have zero or two 
elements and the pre-image of zero has three elements. A different example is 
$f(X) = X - X^{(p+1)/2}$, which has $V(f) = (p+1)/2$ and the pre-image of $0$ has $(p+1)/2$ 
elements and other pre-images zero or one elements.

For arbitrary primes, we have the following.
Assume that $d \mid (p-1)$ and consider $f(X) = X + aX^{1+(p-1)/d}$.
Choose $a$, if possible, such that 
$\left( (1+a)/(1+\zeta a) \right)^{(p-1)/d} = \zeta$ for
all $\zeta$ with $\zeta^d = 1$. Then, if $x_1^{(p-1)/d} = 1$ and 
$x_{\zeta} = (1+a)x_1/(1+\zeta a)$, then $x_{\zeta}^{(p-1)/d} = \zeta$ and
$f(x_{\zeta}) = f(x_1)$ and it follows that $V(f) = 1 + (p-1)/d$.

To see when we can find such $a$, let $c_{\zeta}$ be such that
$c_{\zeta}^{(p-1)/d} = \zeta$ with $\zeta^d = 1$. Consider the curve
given by the system of equations $(1+u)/(1+\zeta u) = c_{\zeta}v_{\zeta}^d$
in variables $u$ and $v_{\zeta}$, indexed by $\zeta \ne 1$ with $\zeta^d = 1$. A rational
point with $u= a \ne 0$ provides the necessary $a$. The genus of this
curve is at most $d^d/2$ so by the Weil bound on the number of $\F_p$-rational points on curves (see~\cite{Lor}), there is such a point if $p > d^{2d}$.
This construction succeeds if $d \ll \log p/(\log \log p)$.

We also conclude with posing an question about estimating the image size of
polynomials of the form 
$$
F(X) = \prod_{i=1}^t  \(X^{n_i} + a_i\) 
$$ 
Although most of our technique applies in this case as well,  investigating linear factors 
of $F(c X^s) - F(c Y^s)$  seems to be more complicated.

\section*{Acknowledgements}

 The authors like to thank Domingo  G\'omez-P\'erez for several useful comments.  

During the preparation of this work the first author was  supported   by the ARC Grants~DP170100786 and DP180100201

\bibliographystyle{amsalpha}

\end{document}